\documentclass[12pt]{amsart}
\setcounter{tocdepth}{1}
\usepackage{amsmath,amsfonts,amssymb,mathrsfs,amstext,amscd,latexsym,amsthm}
\usepackage{comment}
\usepackage{mathtools}
\usepackage{hyperref}
\usepackage{enumitem}
\usepackage{xcolor}
\newcommand{\R}{\ensuremath{\mathbb{R}}}

\newcommand{\N}{\ensuremath{\mathbb{N}}}

 
\DeclareMathOperator{\inrad}{\mathrm{inrad}}

\DeclareMathOperator{\spt}{\mathrm{spt}}

\DeclareMathOperator{\tr}{\mathrm{tr}}


\newcommand{\pd}{\partial}
\newcommand{\cd}{\nabla}


\newcommand{\inner}[2]{\left\langle #1 \, , \, #2\right\rangle} 

\newcommand{\lb}{\left(}
\newcommand{\rb}{\right)}
\newcommand{\lsb}{\left[}
\newcommand{\rsb}{\right]}

\newcommand{\Ges}{G_{\varepsilon,\sigma}}
\newcommand{\Gesp}{G_{\varepsilon,\sigma,+}}

\def\labelitemi{--}
\def\ba #1\ea {\begin{align} #1\end{align}}
\def\bann #1\eann {\begin{align*} #1\end{align*}}
\def\ben #1\een {\begin{enumerate} #1\end{enumerate}}
\def\bi #1\ei {\begin{itemize}\renewcommand\labelitemi{--} #1\end{itemize}}
\theoremstyle{plain}
\newtheorem{thm}{Theorem}
\newtheorem*{thm*}{Theorem}

\newtheorem{cor}[thm]{Corollary}

\newtheorem{prop}[thm]{Proposition}

\newtheorem*{conds*}{Conditions}
\newtheorem*{auxconds*}{Ancillary Conditions}
\newtheorem*{props*}{Properties}
\theoremstyle{remark}
\newtheorem{rem}{Remark}

 
\usepackage{cite}

\title[Local convexity estimates for MCF]{Local convexity estimates for mean curvature flow}

\author{Mat Langford}
\date{\today}
\thanks{This research was partially supported by ARC DECRA grant DE200101834.}

\address{Department of Mathematics, University of Tennessee Knoxville, Knoxville TN, USA, 37996-1320}
\address{School of Mathematical and Physical Sciences, The University of Newcastle, Newcastle, NSW, Australia, 2308}
\email{mlangford@utk.edu, mathew.langford@newcastle.edu.au}

\begin{document}

\begin{abstract}
We develop a local version of Huisken--Stampacchia iteration, using it to obtain local versions of a host of important sharp curvature pinching estimates for mean curvature flow. 
The local estimates we obtain do not depend on the quality of noncollapsing of the solution and the method adopted applies in a host of other settings.
\end{abstract}

\maketitle





Developing a quantitative structure theory for singularities in geometric flows is a fundamental problem, since their emergence prevents the flow from reaching an equilibrium state, and hence obstructs many desired applications. One powerful tool for analyzing singularity formation in extrinsic geometric flows is Huisken--Stampacchia iteration, which is the main tool in the proof of various important curvature pinching estimates that improve at the onset of singularities. 
For mean curvature flow, these a priori estimates imply that a compact, strictly $(m+1)$-convex\footnote{Recall that a hypersurface of $\R^{n+1}$ is \emph{strictly $m$-convex} if the sum $\kappa_1+\dots+\kappa_m$ of its smallest $m$ of its principal curvatures $\kappa_1\le\kappa_2\le\dots\le\kappa_n$ is positive.} hypersurface evolving by mean curvature becomes weakly convex and either becomes strictly $m$-convex or forms a high quality $m$-neck in regions of sufficiently high curvature \cite{Hu84,HuSi99a,HuSi99b,HuSi09,L17} (see \cite{HK1} for a different approach for embedded flows). This provides a powerful description of singularity formation; however, there remains the major drawback that the estimates depend on \emph{global} data from the initial (compact) hypersurface, whereas singularity formation is a \emph{local} phenomenon. This prevents application of the estimates to noncompact solutions. It also prevents iteration of the estimates in a neighbourhood of a singularity, which seems to be a basic requirement for extending the Huisken--Sinestrari surgery algorithm for 2-convex hypersurfaces \cite{HuSi09} to weaker intermediate convexity conditions.

We will localize these pinching estimates by introducing suitable cutoff functions and developing a local version of the Huisken--Stampacchia method. 
Fixing data $n\in \N\setminus\{1\}$, $m\in \{0,\dots,n-1\}$, $\alpha>0$, $\Theta<\infty$, $V<\infty$, $q>0$, $\lambda>0$ and $\delta\in(0,\lambda)$ such that $\sqrt{q}\,\Theta\ge (\lambda-\delta)^{-1}$ and $\Theta V\ge 1$, and a scale parameter $R>0$, our main result may be stated as follows.

\begin{thm}[Local convexity and cylindrical estimates]\label{thm:convexity_and_cylindrical_estimates2}
If a mean curvature flow is properly defined in $B_{\lambda R}\times (0,\frac{1}{2n}R^2)\subset \R^{n+1}\times \R$ and satisfies
\begin{itemize}
\item $\displaystyle\inf_{B_{\lambda R}\times\{0\}\cup \pd B_{\lambda R}\times (0,\frac{1}{2n}R^2)}\frac{\kappa_1+\dots+\kappa_{m+1}}{\vert A\vert}\ge \alpha>0$,
\item $\displaystyle\sup_{B_{\lambda R}\times\{0\}\cup B_{\lambda R}\setminus B_{(\lambda-\delta)R}\times (0,\frac{1}{2n}R^2)}\tfrac{1}{n}H\le \Theta R^{-1}$, and
\item $\frac{R^2}{2n}\displaystyle\int_{B_{\lambda R}}\!\!\!H^q\,d\mu_0+\delta^{-2}\int_0^{\frac{R^2}{2n}}\!\!\!\!\int_{B_{\lambda R}\setminus B_{(\lambda-\delta)R}}\!\!\!\!\!H^q\,d\mu_t\,dt\le (\Theta R^{-1})^q(VR)^{n+2}$,
\end{itemize}
then, given any $\varepsilon>0$ and $\vartheta\in(0,1)$, it satisfies the estimates
\ba\label{eq:convexity}
\kappa_1\geq -\varepsilon H-C_\varepsilon R^{-1}
\ea
and
\ba\label{eq:m convexity}
\sum_{j=m+1}^n(\kappa_n-\kappa_j)\le \sum_{j=1}^m\kappa_j+\varepsilon H+C_\varepsilon R^{-1}
\ea
in $B_{(\lambda-\delta)\vartheta R}\times(0,\tfrac{1}{2n}R^2)$, where $C_\varepsilon=c(n,\alpha,q,\varepsilon)\Theta \left[\frac{\Theta V}{1-\vartheta}\right]^{\frac{2}{q}}$.
\end{thm}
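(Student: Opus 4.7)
The plan is to run a localized Huisken--Stampacchia iteration on a suitable pinching ratio. For the convexity estimate I work with $f_\varepsilon := -\kappa_1/H - \varepsilon$, and for the cylindrical estimate with $f_\varepsilon := H^{-1}\bigl(\sum_{j=m+1}^n(\kappa_n-\kappa_j) - \sum_{j=1}^m \kappa_j\bigr) - \varepsilon$. In either case the $(m+1)$-pinching hypothesis forces $H>0$ on the relevant parabolic cylinder, so $f_\varepsilon$ is well-defined there, and the stated estimates reduce to an $L^\infty$ bound of the form $(f_\varepsilon)_+ \le C_\varepsilon (RH)^{-1}$ on the inner cylinder.

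The first step is to derive, on $\{f_\varepsilon>0\}$, the reaction--diffusion inequality
\[
(\partial_t-\Delta) f_\varepsilon \;\le\; \tfrac{2}{H}\langle \nabla H,\nabla f_\varepsilon\rangle \;-\; c(n,\alpha,\varepsilon)\,|A|^2 f_\varepsilon,
\]
where the strictly negative reaction coefficient is extracted from the pinching constant $\alpha$ via the classical Huisken--Sinestrari algebra (for the cylindrical quantity this rests on a Kato-type refinement that distinguishes the smallest $m$ eigendirections). Next, I introduce a spatial cutoff $\eta(x) = \phi_\vartheta(|x|/R)$ supported in $B_{\lambda R}$, equal to $1$ on $B_{(\lambda-\delta)\vartheta R}$, with $|\nabla\eta|\le C[(1-\vartheta)\delta R]^{-1}$. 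Multiplying the evolution inequality by $\eta^{2p}(f_\varepsilon - k)_+^{p-1}$ for $k\ge 0$, integrating against $d\mu_t\,dt$, and integrating by parts (absorbing the drift term with a Cauchy--Schwarz step on the good gradient) yields
\[
\sup_t\!\int_{M_t}\!(f_\varepsilon - k)_+^p\eta^{2p} d\mu \;+\; c_p\!\int\!\!\int\!\bigl|\nabla[(f_\varepsilon - k)_+^{p/2}\eta^p]\bigr|^2 d\mu\,dt \;\le\; C_p \int\!\!\int_{A_k}\!(f_\varepsilon)_+^p|\nabla\eta|^2 \eta^{2p-2}\,d\mu\,dt,
\]
where $A_k:=\{\eta(f_\varepsilon)_+>k\}$ and the boundary/initial terms vanish by the support of $\eta$ together with the second and third hypotheses.

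The Michael--Simon--Sobolev inequality applied to the good gradient term then promotes an $L^p$ estimate to an $L^{pn/(n-2)}$ estimate; the $H^2$ factor that Michael--Simon introduces is absorbed by the reaction $|A|^2 f_\varepsilon$ via $H\le C(n,\alpha)|A|$ on $\{f_\varepsilon>0\}$. A standard Stampacchia lemma, iterated on nested levels $k_j\nearrow k_\infty$, converts the family of $L^p$-improvement inequalities into the sought $L^\infty$ bound on $\eta(f_\varepsilon)_+$. The base $L^q$ norm required to start the iteration comes from the third hypothesis on $\int H^q$, since $(f_\varepsilon)_+$ and $H/\alpha$ are comparable on the support; the scale-invariant volume factor is precisely $\Theta V$, and the cutoff gradient contributes the factor $(1-\vartheta)^{-2/q}$ after tracing constants through the iteration, producing the stated form $c(n,\alpha,q,\varepsilon)\,\Theta\,[\Theta V/(1-\vartheta)]^{2/q}$.

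The principal technical obstacles will be: (i) the derivation of the reaction--diffusion inequality with a strictly negative reaction, which for the cylindrical pinching function is the most delicate algebraic point and uses the $(m+1)$-pinching essentially; (ii) handling the boundary and initial cutoff errors in the integration by parts, where the second hypothesis (pointwise $H$-bound on the initial slice and on the spatial annulus where $\eta$ lives) is needed to control terms that would otherwise be uncontrolled; and (iii) closing the Stampacchia iteration with the precise $(\Theta V/(1-\vartheta))^{2/q}$ dependence, which requires choosing a geometric sequence of nested cutoffs $\eta_j$ interpolating between $B_{(\lambda-\delta)R}$ and $B_{(\lambda-\delta)\vartheta R}$ so that the increments $|\nabla\eta_j|^2 R^2$ sum geometrically to $(1-\vartheta)^{-2}$.
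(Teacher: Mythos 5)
Your plan stands or falls on the claimed pointwise inequality $(\partial_t-\Delta) f_\varepsilon \le \tfrac{2}{H}\langle \nabla H,\nabla f_\varepsilon\rangle - c(n,\alpha,\varepsilon)|A|^2 f_\varepsilon$ on $\{f_\varepsilon>0\}$, and that is where the proposal breaks: no such strictly negative reaction term is available for either the convexity or the cylindrical pinching ratio. The Huisken--Sinestrari algebra (and the computation the paper invokes for $G_{\varepsilon,\sigma}=\big(G-\varepsilon(H-\tfrac{\alpha}{2}|A|)\big)H^{\sigma-1}$) produces an inequality with a \emph{bad} zero-order term $+\sigma|A|^2\,G_{\varepsilon,\sigma}$ and a \emph{good gradient} term $-\gamma\,\tfrac{|\nabla A|^2}{H^2}\,G_{\varepsilon,\sigma}$; in the scale-invariant ratio the $|A|^2$ contributions from numerator and denominator cancel, they do not leave a negative coefficient. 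If your inequality were true, the maximum principle together with your boundary and initial hypotheses would give the estimate outright and the entire Stampacchia apparatus would be redundant --- the reason these estimates require integral iteration at all is precisely the absence of a favourable pointwise reaction term.

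As a consequence, the step where you ``absorb the Michael--Simon $H^2$ term by the reaction'' has nothing to absorb it into (in the paper that term appears with a good sign because the measure evolves by $\tfrac{d}{dt}d\mu=-H^2 d\mu$), and, more seriously, the bad term $\sigma p|A|^2$ in the $L^p$ differential inequality is left uncontrolled. The missing key lemma is the Poincar\'e-type inequality for functions supported away from cylindrical points (Proposition 2.7 of the author's earlier paper, cited as [L17]), which uses Simons' identity and the strict $(m+1)$-convexity to convert $\int \psi^2 v^2|A|^2\,d\mu$ into $\beta\int\big(\psi^2|\nabla v|^2+v^2|\nabla\psi|^2\big)d\mu + P(1+\beta^{-1})\int\psi^2v^2\tfrac{|\nabla A|^2}{H^2}d\mu$; only after this, with the choices $p\ge L$, $\sigma\le \ell p^{-1/2}$ and $\sigma p=q$, does the $L^p$ estimate close and the iteration start. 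Two further structural points you should repair: the initial and annulus contributions do not ``vanish'' --- in the paper they are controlled by the $L^p$ estimate with the $\delta$-cutoff and, in the iteration, by starting the levels at $k_0=\Theta^\sigma$, which by the second hypothesis forces the super-level sets to avoid $B_{\lambda}\setminus B_{\lambda-\delta}$ and the initial slice; and without the $H^{\sigma-1}$ weighting (so that the final bound reads $G_{\varepsilon,\sigma}\le \Theta^\sigma(1+\dots)$ and Young's inequality yields the additive $C_\varepsilon R^{-1}$), an $L^\infty$ bound on the dimensionless ratio $f_\varepsilon$ cannot produce the stated form of the estimate.
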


\begin{rem}\mbox{}
\begin{itemize}
\item The hypotheses of the theorem are very close in nature to those of the aforementioned global estimates; however, since the estimates are local, appropriate conditions must also be assumed at the spatial boundary. Cf. \cite{Mramor}.
\item Taking $\lambda$ sufficiently large, we see that the (global) pinching estimates for mean curvature flow 
are an immediate consequence of Theorem \ref{thm:convexity_and_cylindrical_estimates2}. 
\item Local estimates of a similar nature have been obtained in \cite{HK1} (cf. \cite{Mramor}) assuming a noncollapsing condition (in the sense of \cite{An12,ShWa09}); however, the mean curvature flow in Theorem \ref{thm:convexity_and_cylindrical_estimates2} need not be noncollapsing, nor even embedded; it need only be \emph{proper}, which we take to mean that the pair $(X,t):M\times (t_{\mathrm{i}},t_{\mathrm{f}})\to \R^{n+1}\times \R$ forms a proper map with respect to the subset $B_{\lambda R}\times(-\frac{1}{2n}R^2,0)\subset \R^{n+1}\times\R$, where $X:M\times (t_{\mathrm{i}},t_{\mathrm{f}})\to \R^{n+1}$ is a parametrization for the flow and $t:M\times (t_{\mathrm{i}},t_{\mathrm{f}})\to \R$ is the projection onto the second (time) factor. Moreover, the method adopted here can be applied in other contexts, such as flows by nonlinear speeds, high codimension mean curvature flow, or free-boundary mean curvature flow.
\item The estimates can be localized in more general open subsets of $\R^{n+1}$, or parabolically open subsets of $\R^{n+1}\times\R$, by exploiting different cut-off functions.
\item Due to the pointwise curvature bound of the second hypothesis in Theorem \ref{thm:convexity_and_cylindrical_estimates2}, the integral curvature bound of the third hypothesis may be replaced by an area bound of the form
\[
\frac{R^2}{2n}\displaystyle\int_{B_{\lambda R}}\!\!\!\,d\mu_0+\delta^{-2}\int_0^{\frac{R^2}{2n}}\!\!\!\int_{B_{\lambda R}\setminus B_{(\lambda-\delta)R}}\!\!\!\!\!\,d\mu_t\,dt\le (VR)^{n+2}\,.
\] 
In fact, as can be easily deduced from the proof, each of these hypotheses need only be made in regions where $\varepsilon$-pinching fails.
\item Even in the global setting, Theorem \ref{thm:convexity_and_cylindrical_estimates2} provides a more precise accounting of the dependence of the constant $C_\varepsilon$ on the boundary data. 
\item By applying maximum principle type arguments similar to those of \cite[Theorems 6.1 and 6.3]{HuSi09}, Theorem \ref{thm:convexity_and_cylindrical_estimates2} yields corresponding local derivative estimates for the second fundamental form $A$, so long as $m<\frac{2(n-1)}{3}$, and therefore also a corresponding local neck detection lemma (cf. \cite[Lemma 7.4]{HuSi09}). This makes a local surgery algorithm possible when $m=1$ (and $n\ge 3$).
\end{itemize}
\end{rem}

\begin{proof}[Proof of Theorem \ref{thm:convexity_and_cylindrical_estimates2}]
Let $G$ be given either by 
\[
G\doteqdot -\kappa_1\;\;\text{or by}\;\; G\doteqdot \kappa_n-\tfrac{1}{n-m}H
\]
and set, for any $\varepsilon>0$ and $\sigma\in (0,1)$,
\bann
G_\varepsilon\doteqdot G-\varepsilon(H-\tfrac{\alpha}{2}|A|)\,,\;\; \Ges\doteqdot G_\varepsilon H^{\sigma-1}\;\; \text{and}\;\; \Gesp\doteqdot \max\{\Ges,0\}\,.
\eann
Well known calculations then show that
\ba\label{eq:evolveGes}
\frac{(\pd_t-\Delta)\Ges}{\Ges}\leq \sigma|A|^2-\gamma\frac{|\cd A|^2}{H^2}+\gamma^{-1}\frac{|\cd\Ges|^2}{\Ges^2}
\ea
in $B_\lambda\times (0,\frac{1}{2n})\cap\,\spt \Ges$ in the distributional sense\footnote{In what follows, all differential inequalities are intended in the distributional sense.}, where $\gamma=\gamma(n,\alpha,\varepsilon)>0$ (see, for example, \cite[Proposition 12.9]{EGF} and \cite[Section 3]{L17}). 

Given any $\zeta\in C^\infty_0(B_\lambda)$, set $\psi\doteqdot \zeta\circ X$. Setting $v\doteqdot \Gesp^\frac{p}{2}$ we then obtain
\bann
\frac{(\pd_t-\Delta)\psi^2v^2}{\psi^2v^2}
={}&2\frac{(\pd_t-\Delta)\psi}{\psi}-2\frac{|\cd\psi|^2}{\psi^2}+\frac{p(\pd_t-\Delta)\Ges}{\Ges}\\
{}&-4\frac{p-1}{p}\frac{|\cd v|^2}{v^2}-8\inner{\frac{\cd\psi}{\psi}}{\frac{\cd v}{v}}
\eann
wherever $\psi v>0$. Applying Young's inequality to the final term and recalling \eqref{eq:evolveGes} we obtain, for $p\geq 6(1+\gamma^{-1})$,
\bann
\frac{(\pd_t-\Delta)\psi^2v^2}{\psi^2v^2}\leq{}&2\frac{(\pd_t-\Delta)\psi}{\psi}+6\frac{|\cd\psi|^2}{\psi^2}-\lb 2-\frac{4}{p}\rb\frac{|\cd v|^2}{v^2}\\
{}&+p\lb\sigma|A|^2-\gamma\frac{|\cd A|^2}{H^2}+\frac{4}{\gamma p^2}\frac{|\cd v|^2}{v^2}\rb\,.
\eann
Fix $r$ and $R$ so that $0<r<R\le \lambda$. If we choose the function $\zeta:\R^{n+1}\to\R$ so that
\ben
\item $\displaystyle \zeta(X)=0$ when $X\notin B_R$ and $\displaystyle \zeta(X)=1$ when $X\in B_r$,
\item $\displaystyle |D_i\zeta|^2\leq 10(R-r)^{-2}\zeta$ for each $i$, and
\item $\displaystyle |D_iD_j\zeta|\leq 10(R-r)^{-2}$ for each $i$ and $j$,
\een
then
\bann
(\pd_t-\Delta)\psi
\leq{}&10n(R-r)^{-2}\chi_{B_R\setminus B_r}
\;\;\text{and}\;\;
\frac{|\cd\psi|^2}{\psi}\leq 10n(R-r)^{-2}\chi_{B_R\setminus B_r}\,,
\eann
where $\chi_{B_R\setminus B_r}$ denotes the characteristic function of the set ${B_R\setminus B_r}$ (pulled back by the flow parametrization $X$). 

We thus obtain
\ba\label{eq:evolvepsiGes}
\frac{(\pd_t-\Delta)\psi^2v^2}{\psi^2v^2}\leq{}&\frac{100n}{(R-r)^2}\frac{\chi_{B_R\setminus B_r}}{\psi}-2\lb 1-\frac{2+2\gamma^{-1}}{p}\rb\frac{|\cd v|^2}{v^2}\nonumber\\
{}&-\gamma p\frac{|\cd A|^2}{H^2}+\sigma p|A|^2\nonumber\\
\leq{}&\frac{100n}{(R-r)^2}\frac{\chi_{B_R\setminus B_r}}{\psi}-\frac{4}{3}\frac{|\cd v|^2}{v^2}-\gamma p\frac{|\cd A|^2}{H^2}+\sigma p|A|^2
\ea
wherever $\psi v>0$.

\subsection*{The $L^2$-estimate}\label{sec:L2est}

Applying \eqref{eq:evolvepsiGes} and the gradient flow property of mean curvature flow, we obtain
\ba\label{eq:basicesimate}
\frac{d}{dt}\!\int\!\psi^2v^2d\mu+\!\int\!\psi^2v^2H^2\,d\mu={}&\int\pd_t(\psi^2v^2)d\mu\nonumber\\
\leq{}&\!\int\!\psi^2v^2\!\lb \sigma p|A|^2-\frac{4}{3}\frac{|\cd v|^2}{v^2}-\gamma p\frac{|\cd A|^2}{H^2}\rb\!d\mu\nonumber\\
{}&+\frac{100n}{(R-r)^2}\int_{B_R\setminus B_r}\psi v^2\,d\mu\,.
\ea

We now apply the Poincar\'e-type inequality \cite[Proposition 2.7]{L17} to $u=\psi v$. This yields, for any $\beta>0$,
\bann
\int \psi^2v^2|A|^2d\mu\leq{}& \beta\int \psi^2v^2\left\vert \frac{\cd\psi}{\psi}+\frac{\cd v}{v}\right\vert^2d\mu+P(1+\beta^{-1})\int \psi^2v^2\frac{|\cd A|^2}{H^2}d\mu\\
\leq{}&2\beta\int \left(\psi^2|\cd v|^2+v^2|\cd\psi|^2\right)\,d\mu\\
{}&+P(1+\beta^{-1})\int \psi^2v^2\frac{|\cd A|^2}{H^2}\,d\mu\,,
\eann
where $P=P(n,\alpha,\varepsilon)$. Setting $\beta=p^{-\frac{1}{2}}$ and recalling \eqref{eq:basicesimate}, we find that
\bann
\frac{d}{dt}\int\psi^2v^2\,d\mu\leq{}&\frac{(100+20\sigma p^{\frac{1}{2}})n}{(R-r)^2}\int_{B_R\setminus B_r}\psi\,v^2\,d\mu\\
{}&+\lb 2\sigma p^{\frac{1}{2}}-\frac{4}{3}\rb \int \psi^2|\cd v|^2\,d\mu\\
{}&+\lb\sigma P(1+p^{\frac{1}{2}})-\gamma\rb p\int\psi^2v^2\frac{|\cd A|^2}{H^2}\,d\mu\,.
\eann
Choosing $p\geq L$ and $\sigma\leq \ell p^{-\frac{1}{2}}$, where $\ell=\ell(n,\alpha,\varepsilon)\le 4$ and $L=L(n,\alpha,\varepsilon)$, yields
\ba\label{eq:Lpmonotone}
\frac{d}{dt}\int\psi^2v^2\,d\mu\leq{}&\frac{200n}{(R-r)^2}\int_{B_R\setminus B_r}\psi v^2\,d\mu\,.
\ea
If we choose $R=\lambda$ and $r=\lambda-\delta$, then, integrating in time, we find that
\bann
\sup_{t\in(0,\frac{1}{2n}R^2)}\int_{B_{\lambda-\delta}}v^2\,d\mu\leq{}&\int_{B_{\lambda}}v^2\,d\mu_0+200n\delta^{-2}\int\!\!\!\int_{B_\lambda\setminus B_{\lambda-\delta}}v^2\,d\mu_t\,dt\,,
\eann
and hence
\begin{equation}\label{eq:Lpest}
\int\!\!\!\int_{B_{\lambda-\delta}}v^2\,d\mu\,dt\le \frac{1}{2n}\int_{B_{\lambda}}v^2\,d\mu_0+100\delta^{-2}\int\!\!\!\int_{B_\lambda\setminus B_{\lambda-\delta}}v^2\,d\mu_t\,dt\,,
\end{equation}
so long as $p\geq L$ and $\sigma\leq \ell p^{-\frac{1}{2}}$.


\subsection*{From $L^2$ to $L^\infty$}

Stampacchia iteration will now allow us to pass from $L^2$ to $L^\infty$. We assume the reader is familiar with \cite[Section 5]{Stampacchia66} or \cite[Chapter II: Appendices B and C]{StKi}.

Given $k\ge k_0\doteqdot \Theta^\sigma\ge \sup_{B_\lambda\setminus B_{\lambda-\delta}}G_{\varepsilon,\sigma}$ and $R\le \lambda-\delta$, consider
\bann
v_k^2\doteqdot \lb\Ges-k\rb^p_+\quad\text{and}\quad A_{k,R}\doteqdot \{(x,t)\in X^{-1}(B_R):v_k(x,t)>0\}
\eann
and set
\bann
u(k,R)\doteqdot \int\!\!\!\int_{A_{k,R}} v_k^2\,d\mu_t\,dt\quad\text{and}\quad a(k,R)\doteqdot \int\!\!\!\int_{A_{k,R}}d\mu_t\,dt\,.
\eann
Note that, for any $h\geq k>0$ and any $0<r\leq R\le \lambda-\delta$,
\ba\label{eq:iteration1}
(h-k)^pa(h,r)\leq u(k,r)\,.
\ea

We need an estimate for $u(k,r)$. First observe that, computing as in \eqref{eq:basicesimate}, we can estimate
\bann
\frac{d}{dt}\int\psi^2v_k^2\,d\mu+\int_{A_{k,r}}\hspace{-3mm}|\cd v_k|^2\,d\mu+\int_{A_{k,r}}\hspace{-3mm}v_k^2H^2\,d\mu
\leq{}& \frac{100n}{(R-r)^2}\int_{A_{k,R}}\hspace{-3mm}v_k^2\,d\mu\\
{}&+\sigma p\int_{A_{k,R}}\hspace{-3mm}\Ges^p|A|^2\,d\mu
\eann
for any $k>0$ and $r<R\le \lambda-\delta$, where $\psi$ is a cut-off function satisfying $\psi\equiv 1$ on $B_r$ and $\psi\equiv 0$ outside of $B_R$. 
%
%
On the other hand, the Sobolev inequality of Michael and Simon \cite[Theorem 2.1]{MS73} 
yields
\bann
\lb\int_{A_{k,r}}v_k^{2^\ast}\,d\mu\rb^\frac{2}{2^\ast}\leq c_S\int_{A_{k,r}}\lb|\cd v_k|^2+H^2v_k^2\rb\,d\mu\,,
\eann
where\footnote{We can interpret the left hand side as the square of the $L^\infty$-norm when $n=2$ with $2^\ast$ any fixed number bigger than one and the constant $c_S$ depending additionally on $2^\ast$ and the measure of the support of $v_k$. Cf. \cite{Hu84}.}, for $n\ge 3$, $\frac{1}{2^\ast}=\frac{1}{2}-\frac{1}{n}$ and $c_S$ depends only on $n$, 
so that
\bann
\frac{d}{dt}\int\psi^2v_k^2\,d\mu+\frac{1}{c_S}\lb\int_{A_{k,r}}v_k^{2^\ast}\,d\mu\rb^\frac{2}{2^\ast}\leq{}& \frac{100n}{(R-r)^2}\int_{A_{k,R}}v_k^2\,d\mu\\
{}&+\sigma p\int_{A_{k,R}}\Ges^p|A|^2\,d\mu\,.
\eann
Integrating with respect to $t$ then yields
\bann
\sup_{t\in(0,\frac{1}{2n}R^2)}\int_{A_{k,r}}\hspace{-3mm}v_k^2\,d\mu+&\int\hspace{-1mm}\lb\int_{A_{k,r}}\hspace{-3mm}v_k^{2^\ast}\,d\mu\rb^\frac{2}{2^\ast}\hspace{-1mm}dt\\
\leq{}&\frac{100nc_S}{(R-r)^2}\int\!\!\!\int_{A_{k,R}}v_k^2\,d\mu\,dt+c_S\sigma p\int\!\!\!\int_{A_{k,R}}\Ges^p|A|^2\,d\mu\,dt\,.
\eann
By the interpolation inequality,
\bann
\int_{A_{k,r}}v_k^{\frac{2(n+2)}{n}}\,d\mu\leq \lb\int_{A_{k,r}}v_k^2\,d\mu\rb^{\frac{2}{n}}\lb\int_{A_{k,r}}v_k^{2^\ast}\,d\mu\rb^{\frac{2}{2^\ast}}
\eann
and hence, by Young's inequality,
\bann
\lb\int\!\!\!\int_{A_{k,r}}\hspace{-3mm}v_k^{\frac{2(n+2)}{n}}d\mu\,dt\rb^{\frac{n}{n+2}}\leq{}& \lb\sup_{t\in(0,\frac{1}{2n}R^2)}\int_{A_{k,r}}\hspace{-3mm}v_k^2\,d\mu\rb^{\frac{2}{n+2}}\lb\int\hspace{-1mm}\lb\int_{A_{k,r}}\hspace{-3mm}v_k^{2^\ast}d\mu\rb^{\frac{2}{2^\ast}}\hspace{-1mm}dt\rb^{\frac{n}{n+2}}\\
\leq{}& \tfrac{2}{n+2}\sup_{t\in(0,\frac{1}{2n}R^2)}\int_{A_{k,r}}\hspace{-3mm}v_k^2\,d\mu+\tfrac{n}{n+2}\int\hspace{-1mm}\lb\int_{A_{k,r}}\hspace{-3mm}v_k^{2^\ast}d\mu\rb^{\frac{2}{2^\ast}}\hspace{-1mm}dt\,.
\eann
Thus,
\ba\label{eq:uest1}
\lb\int\!\!\!\int_{A_{k,r}}v_k^{\frac{2(n+2)}{n}}d\mu\,dt\rb^{\frac{n}{n+2}}\leq{}&
\frac{100nc_S}{(R-r)^2}\int\!\!\!\int_{A_{k,R}}v_k^2\,d\mu\,dt\nonumber\\
{}&+c_S\sigma p\int\!\!\!\int_{A_{k,R}}\Ges^p|A|^2\,d\mu\,dt\,.
\ea
Applying H\"older's inequality and (choosing $\ell$ slightly smaller\footnote{Depending now also on $\rho$, which will be fixed momentarily.}), we estimate, for $\sigma'\doteqdot \sigma+\frac{2}{p}$ and some soon-to-be-determined $\rho\geq 1$,
\ba\label{eq:uest2}
\int\!\!\!\int_{A_{k,R}}H^2\Ges^p\,d\mu\,dt={}&\int\!\!\!\int_{A_{k,R}}G_{\varepsilon,\sigma',+}^p\,d\mu\,dt\nonumber\\
\le{}& a(k,R)^{1-\frac{1}{\rho}}\lb\int\!\!\!\int_{A_{k,R}}G_{\varepsilon,\sigma',+}^{p\rho}\,d\mu\,dt\rb^{\frac{1}{\rho}}\nonumber\\
\leq{}& a(k,R)^{1-\frac{1}{\rho}}\lb\int\!\!\!\int_{B_{\lambda-\delta}}G_{\varepsilon,\sigma',+}^{p\rho}\,d\mu\,dt\rb^{\frac{1}{\rho}}\,.
\ea

Similarly, we may estimate, for any $k>k_0$ and $R<\lambda-\delta$,
\ba\label{eq:iteration3}
u(k,R)
\leq{}& a(k,R)^{1-\frac{1}{\rho}}\lb\int\!\!\!\int_{B_{\lambda-\delta}}\Gesp^{p\rho}\,d\mu\,dt\rb^{\frac{1}{\rho}}\,.
\ea

Finally, we estimate
\ba\label{eq:uest3}
u(k,r)
\leq a(k,r)^{\frac{2}{n+2}}\lb\int\!\!\!\int_{A_{k,r}}v_k^{\frac{2(n+2)}{n}}d\mu\,dt\rb^{\frac{n}{n+2}}\,.
\ea
Since $\vert A\vert\le \alpha^{-1}H$ and $G\le H$, combining \eqref{eq:iteration1}--
\eqref{eq:uest3} and the $L^2$-estimate \eqref{eq:Lpest} yields
%
%
\bann
(h-k)^p(R-r)^2a(h,r)
\leq{}&ca(k,R)^{\gamma}\big(1+\sigma p\Theta^2(R-r)^2\big)\cdot\\
{}&\!\lb\!\frac{1}{2n}\!\int_{B_{\lambda}}\!\!G_{\varepsilon,\sigma,+}^{p\rho}d\mu_0+\frac{100}{\delta^2}\!\int\!\!\!\int_{B_\lambda\setminus B_{\lambda-\delta}}\!\!\!\!\!\!\!\!G_{\varepsilon,\sigma,+}^{p\rho}d\mu\,dt\rb^{\frac{1}{\rho}}\\
\leq{}&ca(k,R)^{\gamma}\big(1+\sigma p\Theta^2(R-r)^2\big)\cdot\\
{}&\lb\!\frac{1}{2n}\!\int_{B_{\lambda}}\!\!H^{\sigma p \rho}d\mu_0+\frac{100}{\delta^2}\!\int\!\!\!\int_{B_\lambda\setminus B_{\lambda-\delta}}\!\!\!\!\!\!\!\!H^{\sigma p\rho}d\mu\,dt\rb^{\frac{1}{\rho}}\\
\leq{}&ca(k,R)^{\gamma}\big(1+\sigma p\Theta^2(\lambda-\delta)^2\big)\Theta^{\sigma p(1-\frac{1}{\rho})}\Lambda^{\frac{\sigma p}{\rho}}
\eann
so long as $p\ge L(n,\alpha,\varepsilon,\rho)$ and $\sigma\le \ell(n,\alpha,\varepsilon,\rho)p^{-\frac{1}{2}}$, where $c=c(n,\alpha,\rho)$, $\gamma\doteqdot 1+\frac{2}{n+2}-\frac{1}{\rho}$, and
\[
\Lambda\doteqdot\lb\frac{1}{2n}\int_{B_{\lambda}}H^{\sigma p}\,d\mu_0+\frac{100}{\delta^2}\int\!\!\!\int_{B_\lambda\setminus B_{\lambda-\delta}}H^{\sigma p}\,d\mu\,dt\rb^{\frac{1}{\sigma p}}\,.
\]

At this point, we fix some $\rho=\rho(n)>1+\frac{n}{2}$ (so that $\gamma=\gamma(n)>1$) and choose $p=p(n,\alpha,\varepsilon,q)\ge L$ and $\sigma=\sigma(n,\alpha,\varepsilon,q)\le \ell p^{-\frac{1}{2}}$ such that $\sigma p=q$. Stampacchia's Lemma \cite[Lemma 5.1]{Stampacchia66} then yields
\bann
a\lb k_0+d,\vartheta R_0\rb=0\,,
\eann
where $R_0\doteqdot \lambda-\delta$ and
\bann
d^p\doteqdot 
\frac{2^{\frac{(p+2)\gamma}{\gamma-1}}c\big(1+\sigma p\Theta^2(\lambda-\delta)^2\big)\Theta^{\sigma p(1-\frac{1}{\rho})}\Lambda^{\frac{\sigma p}{\rho}}a(k_0,R_0)^{\gamma-1}}{(1-\vartheta)^2(\lambda-\delta)^2}\,.
\eann
We may estimate, using the $L^2$-estimate \eqref{eq:Lpest},
\bann
a(k_0,R_0)\leq{}& k_0^{-p}\int\!\!\!\int_{U_{R_0}}\Gesp^p\,d\mu\,dt\\
\leq{}& k_0^{-p}\lb \frac{1}{2n}\int_{B_{\lambda}}\Gesp^p\,d\mu_0+\frac{100}{\delta^2}\int\!\!\!\int_{B_\lambda\setminus B_{\lambda-\delta}}\Gesp^p\,d\mu\,dt\rb\\
\leq{}& k_0^{-p}\Lambda^{\sigma p}\,.
\eann
Since we chose $k_0=\Theta^\sigma$ and $\sigma p=q$, we thus obtain
\bann
\Ges\leq{}& k_0+d\\
={}&k_0\left(1+2^{\frac{(p+2)\gamma}{p(\gamma-1)}}\lsb\frac{c\big(1+\sigma pR_0^2\Theta^2\big)}{(1-\vartheta)^2R_0^2}\rsb^{\frac{1}{p}}\frac{\Theta^{\sigma(1-\frac{1}{\rho})}}{k_0^{1-\frac{1}{\rho}}}\frac{\Lambda^{\frac{2}{n+2}\sigma}}{k_0^{\frac{2}{n+2}}}\right)\\
\le{}&
\Theta^\sigma\left(1+c(n,\alpha,q,\varepsilon)\lsb\frac{(\lambda-\delta)^{-2}+q\,\Theta^2}{(1-\vartheta)^2}V^2\rsb^{\frac{\sigma}{q}}
\right)
\eann
in $B_{\vartheta(\lambda-\delta)}$. Young's inequality then yields 
\bann
G
\le{}& 2\varepsilon H+c(n,\alpha,q,\varepsilon)\Theta \left(\frac{\Theta V}{1-\vartheta}\right)^{\frac{2}{q}}
\eann
in $B_{\vartheta(\lambda-\delta)}$. This completes the proof of Theorem \ref{thm:convexity_and_cylindrical_estimates2}. 
\end{proof}



\bibliographystyle{plain}
\bibliography{../bibliography}

\end{document}